\newcommand{\tluste}[1]{\mbox{\mathversion{bold}$ #1 $}}
\newcommand{\mace}[1]{{{#1}}}
\newcommand{\mna}[1]{{\mathcal{#1}}}
\newcommand{\omace}[1]{\mbox{$\overline{\mace{#1}}$}} 
\newcommand{\umace}[1]{\mbox{$\underline{\mace{#1}}$}} 
\newcommand{\imace}[1]{\mbox{$\tluste{#1}$}}
\def\Mid#1{#1^c}
\def\Rad#1{#1^\Delta}
\newcommand{\onum}[1]{\mbox{$\overline{{#1}}$}} 
\newcommand{\unum}[1]{\mbox{$\underline{{#1}}$}} 
\newcommand{\ovr}[1]{\mbox{$\overline{{#1}}$}} 
\newcommand{\uvr}[1]{\mbox{$\underline{{#1}}$}}
\newcommand{\ivr}[1]{\mbox{$\tluste{#1}$}} 
\newcommand{\inum}[1]{\mbox{$\tluste{#1}$}} 
\newcommand{\R}[0]{{\mathbb{R}}}
\newcommand{\IR}[0]{{\mathbb{IR}}}
\newcommand{\Ss}[0]{\mbox{\large$\Sigma$}}
\newcommand{\Ssunb}[0]{\Ss_{unb}}
\newcommand{\Ssker}[0]{\Ss_{ker}}
\newcommand{\Ssae}[0]{\Ss^{AE}}
\newcommand{\Ssaeunb}[0]{\Ss^{AE}_{unb}}
\newcommand{\Ssaeker}[0]{\Ss^{AE}_{ker}}
\newcommand{\Sstol}[0]{\Ss^{tol}}
\newcommand{\Sstolunb}[0]{\Ss^{tol}_{unb}}
\newcommand{\Sstolker}[0]{\Ss^{tol}_{ker}}
\newcommand{\mmid}[0]{;\,}		%pouzivejte v definici mnozin!
\def\clqq{``}
\def\crqq{''}
\def\quo#1{\clqq{}#1\crqq{}}  % snadny zapis ang. uvozovek
\DeclarePairedDelimiter\parentheses{\lparen}{\rparen}	%for () after an operator
\DeclareMathOperator{\inter}{\mathit{int}}	%interior
\DeclareMathOperator{\diag}{diag}%diag
\def\nref#1{$(\ref{#1})$}
\begin{document}

\frontmatter          % for the preliminaries

\pagestyle{headings}  % switches on printing of running heads

\mainmatter              % start of the contributions

\title{On unbounded directions of linear interval parametric systems and their extensions to AE~solutions}
\titlerunning{Unboundedness of (AE) linear interval parametric systems}

\author{
  Milan Hlad\'{i}k\inst{}%{1}%\fnmsep\inst{2}
}
\authorrunning{M.\ Hlad\'{i}k}

\institute{
Charles University, Faculty  of  Mathematics  and  Physics,
Department of Applied Mathematics,
Malostransk\'e n\'am.~25, 118 00,
Prague, Czech Republic,
\email{hladik@kam.mff.cuni.cz} 
}

\maketitle

\begin{abstract}%\normalsize
We consider a system of linear equations, whose coefficients depend linearly on interval parameters. Its solution set is defined as the set of all solutions of all admissible realizations of the parameters. We study unbounded directions of the solution set and its relation with its kernel. The kernel of a matrix characterizes unbounded direction in the real case and in the case of ordinary interval systems. In the general parametric case, however, this is not completely true. There is still a close relation preserved, which we discuss in the paper. Nevertheless, we identify several special sub-classes, for which the characterization remains valid. Next, we extend the results to the so called AE parametric systems, which are defined by forall-exists quantification.
\keywords{Interval computation, parametric system, linear system, interval system, unboundedness.}
\end{abstract}

%%%%%%%%%%%%%%%%%%%%%%%%%%%%%%%%%%%%%%%%%%%%%%%%%%%%%%%%%%%%%%% 
% INTRODUCTION
%%%%%%%%%%%%%%%%%%%%%%%%%%%%%%%%%%%%%%%%%%%%%%%%%%%%%%%%%%%%%%% 
\section{Introduction}

Solving systems of linear equations is a fundamental problem of scientific computing. Since real world data are often imprecise due to inexact measurements, estimations, and categorization, we naturally have to reflect such uncertainties in system solving. In this paper, we assume that uncertainty has the form of compact intervals, in which the true values of uncertain quantities are lying. To reflect the structure of practical problems, we allow (linear) dependencies between the coefficients of the system of linear equations. We introduce the problem formally after some necessary notation.

%Let us introduce some notation first.
%%%%%
\subsubsection*{Intervals and interval systems.}
An interval matrix is defined as
$$
\imace{A}:=\{A\in\R^{m\times n}\mmid \umace{A}\leq A\leq \omace{A}\},
$$
where $ \umace{A}$ and  $\omace{A}$, $\umace{A}\leq\omace{A}$, are given matrices and the inequality is understood entrywise. 
The midpoint and radius matrices are defined as
$$
\Mid{A}:=\frac{1}{2}(\umace{A}+\omace{A}),\quad
\Rad{A}:=\frac{1}{2}(\omace{A}-\umace{A}).
$$
The set of all interval $m\times n$ matrices is denoted by $\IR^{m\times n}$. Interval vectors are defined analogously.
A system of interval linear equations is a family of systems
$$
Ax=b, \quad A\in\imace{A},\ b\in\ivr{b}.
$$
In this case, the coefficients of the matrix and the right-hand side vector are independent to each other. In many practical problems, however, there are some dependencies between them. That is why we consider a larger class of interval systems below.

%%%%%
\subsubsection*{Linear interval parametric system.}
Let $\ivr{p}=(\inum{p}_1,\dots,\inum{p}_K)^T\in\IR^K$ be a given interval vector, $A^{(1)},\dots,A^{(K)}\in\R^{m\times n}$, and $b^{(1)},\dots,b^{(K)}\in\R^m$.
A linear interval parametric system is a family of systems of linear equations
\begin{align}\label{sysLIPS}
A(p)x=b(p),\ \ p\in\ivr{p},
\end{align}
in which the coefficients depend linearly on parameters, that is, 
\begin{align*}
A(p)=\sum_{k=1}^K p_k A^{(k)},\quad
b(p)=\sum_{k=1}^K p_k b^{(k)}.
\end{align*}
The \emph{(united) solution set} of this parametric system is denoted by $\Ss$ and consists of all solutions of all linear systems corresponding to all possible combinations of parameters $p\in\ivr{p}$, that is,
\begin{align*}
\Ss=\{x\in\R^n\mmid \exists p\in\ivr{p}: A(p)x=b(p)\}.
\end{align*}
A number of works were devoted to computation of a tight enclosure of the solution set; see book \cite{Ska2018} for a survey and very recent papers \cite{HlaSka2019a,Pop2019a,SkaHla2019a}.
However, to find a bounded enclosure, the solution set must be bounded. This motivates us to study unboundedness in this paper.

%%%%%
\subsubsection*{Unbounded directions.}
We investigate the unbounded directions of the solution set~$\Ss$. In particular, we focus on two sets related to unboundedness of~$\Ss$. 
The set of all \emph{unbounded directions} of $\Ss$ is denoted
\begin{align*}
\Ssunb=\left\{y\in\R^n\mmid \exists x\in\Ss\ \forall \alpha\geq0: x+\alpha y\in\Ss\right\}.
\end{align*}
For a real system of linear equations $Ax=b$ which is solvable the set of unbounded directions is simply its kernel $\{y\in\R^n\mmid Ay=0\}$. This motivates us also to investigate the \emph{(united) kernel} of the parametric system \nref{sysLIPS}
\begin{align*}
\Ssker=\{y\in\R^n\mmid \exists p\in\ivr{p}: A(p)y=0\}.
\end{align*}
To the best of our knowledge, a relation between the kernel and unboundedness of the parametric solution set was discussed for the so called tolerable solution set only (see Section~\ref{ssToler}). Namely, Sharaya~\cite{Sha2005a,Sha2005b} addressed the ordinary interval system and Popova~\cite{Pop2015b} investigated the parametric case.
In the context of parametric matrix equations, some conditions for (un)boundedness were proposed by Dehghani-Madiseh \cite{DehDeh2016,DehHla2019a}.

%%%%%
\subsubsection*{Road map.}
In Section~\ref{sGen} we propose general relations between $\Ssunb$ and $\Ssker$. 
In Section~\ref{sSpec} we present two classes of problems, for which both sets coincide.
Section~\ref{sAE} is devoted to a generalization of our results to another types of solution sets of parametric systems, which are characterized by $\forall\exists$-quantification.

%%%%%%%%%%%%%%%%%%%%%%%%%%%%%%%%%%%%%%%%%%%%%%%%%%%%%%%%%%%%%%% 
\section{General results}\label{sGen}

The parametric solution set $\Ss$ is hard to characterize by an explicit formula. Various special cases were discussed in \cite{AleKre1997,AleKre2003,Hla2007b,Hla2008g,Pop2009,Pop2015} and the general case in \cite{May2017,Pop2012}. We will utilize a result from \cite{Hla2012d}.

\begin{theorem}\label{thmLipsChar}
We have $x\in\Ss$ if and only if
\begin{align*}
w^T\big(A(\Mid{p})x-b(\Mid{p})\big)
\leq \sum_{k=1}^K \big|w^T(A^{(k)}x-b^{(k)})\big|\Rad{p}_k\quad \forall w\in\R^m.
\end{align*}
\end{theorem}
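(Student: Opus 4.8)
The plan is to translate membership in $\Ss$ into membership of a single point in a centrally symmetric convex set, and then to apply the support-function characterization of such a set. First I would fix $x$ and introduce the residual vectors $d^{(k)} := A^{(k)}x - b^{(k)} \in \R^m$ for $k=1,\dots,K$. Writing an arbitrary $p\in\ivr{p}$ as $p = \Mid{p} + \delta$ with $|\delta_k|\leq\Rad{p}_k$, the defining equation $A(p)x = b(p)$ becomes $\sum_{k=1}^K p_k d^{(k)} = 0$, which splits as $v + \sum_{k=1}^K \delta_k d^{(k)} = 0$, where $v := A(\Mid{p})x - b(\Mid{p}) = \sum_{k=1}^K \Mid{p}_k d^{(k)}$. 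Hence $x\in\Ss$ holds if and only if $-v$ lies in the set
$$
Z := \Big\{ \sum_{k=1}^K \delta_k d^{(k)} \mmid |\delta_k|\leq\Rad{p}_k,\ k=1,\dots,K \Big\}.
$$

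Next I would observe that $Z$ is a zonotope: it is the Minkowski sum of the segments joining $-\Rad{p}_k d^{(k)}$ and $\Rad{p}_k d^{(k)}$, hence a compact, convex, and centrally symmetric ($Z=-Z$) subset of $\R^m$. For such a set the separating hyperplane theorem yields a clean membership test: a point $u$ belongs to $Z$ if and only if $w^T u \leq h_Z(w)$ for every $w\in\R^m$, where $h_Z(w) := \sup_{z\in Z} w^T z$ is the support function. The main computational step is then to evaluate $h_Z$. Because the supremum decouples across the independent parameters $\delta_k$, I obtain
$$
h_Z(w) = \sum_{k=1}^K \sup_{|\delta_k|\leq\Rad{p}_k} \delta_k\,(w^T d^{(k)}) = \sum_{k=1}^K \Rad{p}_k\,|w^T d^{(k)}|.
$$

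Finally, I would apply the test to $u=-v$ and exploit the central symmetry of $Z$. The condition $-v\in Z$ reads $w^T(-v)\leq h_Z(w)$ for all $w$; substituting $w\mapsto -w$ and using that both $|w^T d^{(k)}|$ and $h_Z$ are even in $w$, this is equivalent to $w^T v \leq \sum_{k=1}^K \Rad{p}_k\,|w^T d^{(k)}|$ for all $w$. Unfolding the definitions of $v$ and $d^{(k)}$ gives exactly the stated inequality. The only genuine subtlety, which I would state explicitly, is the passage to the convex-geometry formulation together with the justification that the support-function criterion applies, namely that $Z$ is closed and convex so that any point outside it can be strictly separated by a hyperplane; the rest is the routine evaluation of the support function and the sign bookkeeping afforded by symmetry.
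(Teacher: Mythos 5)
Your proof is correct. For comparison: the paper does not actually prove Theorem~\ref{thmLipsChar} at all --- it imports it from an earlier reference --- but it does prove the AE generalization, Theorem~\ref{thmAeLipsChar}, and the technique there is genuinely different from yours. You recast $x\in\Ss$ as the geometric statement that $-v$, where $v=A(\Mid{p})x-b(\Mid{p})$, lies in the zonotope $Z$ generated by the segments joining $-\Rad{p}_k d^{(k)}$ and $\Rad{p}_k d^{(k)}$, and then apply strict separation together with the elementary computation $h_Z(w)=\sum_{k=1}^K\Rad{p}_k\big|w^Td^{(k)}\big|$; both implications drop out of the single membership criterion, and the only analytic input --- that $Z$ is compact and convex, being a linear image of a box --- is one you address explicitly. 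The paper's argument for the generalization instead splits into two implications: a direct estimate for the \quo{only if} part, and a Farkas-lemma duality argument for the \quo{if} part, with explicit dual multipliers $u_k,v_k,w$ that must be normalized (so that $u_kv_k=0$) and recombined by hand. The two approaches are close relatives --- Farkas' lemma is separation in algebraic clothing, and your separating vector $w$ plays the role of the paper's dual variable $w$ --- but yours is more economical for the purely existential case, since it avoids the case analysis and multiplier bookkeeping. It is worth noting that your method also extends to the AE setting: there the condition $x\in\Ssae$ becomes the zonotope inclusion $-v+Z^{\forall}\subseteq Z^{\exists}$, where $Z^{\forall}$ and $Z^{\exists}$ are the analogous zonotopes built from the universally and existentially quantified parameters, and comparing support functions of the two sides (containment of a compact convex set in a closed convex one is equivalent to the pointwise inequality of their support functions) yields exactly inequality \nref{ineqAeChar}, giving an alternative proof of Theorem~\ref{thmAeLipsChar} as well.
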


Now, we discuss properties of the sets $\Ssunb$ and $\Ssker$ and relations between them. Obviously, both sets are cones, not necessarily convex.

\begin{theorem}\label{thmUnbKer}
We have $\Ssunb\subseteq\Ssker$.
\end{theorem}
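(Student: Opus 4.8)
The plan is to use the explicit membership description provided by Theorem~\ref{thmLipsChar} and then to pass to a limit along the unbounded ray. First I would take an arbitrary $y\in\Ssunb$ and unfold the definition: there exists a point $x\in\Ss$ such that $x+\alpha y\in\Ss$ for every $\alpha\geq0$. Applying Theorem~\ref{thmLipsChar} to each such point yields that for every $\alpha\geq0$ and every $w\in\R^m$,
\begin{align*}
w^T\big(A(\Mid{p})(x+\alpha y)-b(\Mid{p})\big)\leq\sum_{k=1}^K\big|w^T(A^{(k)}(x+\alpha y)-b^{(k)})\big|\Rad{p}_k.
\end{align*}

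The key step is to divide this inequality by $\alpha>0$ and let $\alpha\to\infty$, holding $w$ fixed. On the left-hand side the constant term $w^T(A(\Mid{p})x-b(\Mid{p}))$ is annihilated and only $w^TA(\Mid{p})y$ survives; on the right-hand side each summand $\tfrac1\alpha\big|w^T(A^{(k)}x-b^{(k)})+\alpha\, w^TA^{(k)}y\big|$ tends to $\big|w^TA^{(k)}y\big|$ by continuity of the absolute value. Hence in the limit I obtain, for every $w\in\R^m$,
\begin{align*}
w^TA(\Mid{p})y\leq\sum_{k=1}^K\big|w^TA^{(k)}y\big|\Rad{p}_k.
\end{align*}

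Finally I would observe that this is precisely the condition of Theorem~\ref{thmLipsChar} applied to the homogeneous parametric system $A(p)y=0$, that is, with all right-hand side vectors $b^{(k)}$ replaced by zero (so that $b(\Mid{p})=0$ as well). Therefore $y$ lies in the solution set of that homogeneous system, which is by definition exactly $\Ssker$, and the inclusion $\Ssunb\subseteq\Ssker$ follows. The only point requiring a little care is the limit passage, but since the characterizing inequality must hold for each $w$ separately, it suffices to take the limit pointwise in $w$, so no uniformity is needed and the argument goes through cleanly. I expect the main conceptual obstacle to be the recognition that the surviving inequality is the homogeneous instance of Theorem~\ref{thmLipsChar}; once that identification is made, the proof is immediate.
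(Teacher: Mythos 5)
Your proof is correct and follows essentially the same route as the paper: both rest on Theorem~\ref{thmLipsChar} and on the fact that, along the ray $x+\alpha y$, the terms linear in $\alpha$ dominate the constant ones, and your identification of the limiting inequality as the homogeneous instance of Theorem~\ref{thmLipsChar} is exactly the right reading of $\Ssker$. The only difference is presentational: you pass to the limit $\alpha\to\infty$ directly (pointwise in $w$), whereas the paper argues by contradiction, fixing a violating $w$ and exhibiting an explicit $\alpha^*$; both are valid.
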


\begin{proof}
Let $y\in\Ssunb$, that is, there is $x^0\in\Ss$ such that
\begin{align*}
w^T\big(A(\Mid{p})(x^0+\alpha y)-b(\Mid{p})\big)
\leq \sum_{k=1}^K \big|w^T(A^{(k)}(x^0+\alpha y)-b^{(k)})\big|\Rad{p}_k
\end{align*}
for every $w\in\R^m$ and every $\alpha\geq0$. Suppose to the contrary that $y\not\in\Ssker$, that is, there is $w\in\R^m$ such that
\begin{align*}
w^T A(\Mid{p})y > \sum_{k=1}^K \big|w^T A^{(k)} y\big|\Rad{p}_k.
\end{align*}
The difference between the left and right-hand sides can be arbitrarily large since we can take an arbitrarily large multiple of~$y$. Thus there is $\alpha^*>0$ such that 
\begin{align*}
w^T A(\Mid{p})\alpha^* y &- \sum_{k=1}^K \big|w^T A^{(k)} \alpha^* y\big|\Rad{p}_k\\
&> w^T\big(b(\Mid{p})-A(\Mid{p})x^0\big)
  + \sum_{k=1}^K \big|w^T(A^{(k)}x^0-b^{(k)})\big|\Rad{p}_k.
\end{align*}
However, this leads to 
\begin{align*}
w^T\big(A(\Mid{p})(x^0+\alpha^* y)-b(\Mid{p})\big)
> \sum_{k=1}^K \big|w^T(A^{(k)}(x^0+\alpha^* y)-b^{(k)})\big|\Rad{p}_k,
\end{align*}
a contradiction.
\end{proof}

\begin{theorem}\label{thmIntUnbKer}
If $\Ss\not=\emptyset$, then $\inter\Ssker\subseteq\inter\Ssunb$.
\end{theorem}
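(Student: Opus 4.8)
The plan is to deduce the statement from the sharper inclusion $\inter\Ssker\subseteq\Ssunb$. Once this is established, the conclusion is purely topological: $\inter\Ssker$ is an open set contained in $\Ssunb$, and since $\inter\Ssunb$ is by definition the largest open subset of $\Ssunb$, we get $\inter\Ssker\subseteq\inter\Ssunb$ at once. This is where the passage from $\Ssker$ to its interior is really spent, and the hypothesis $\Ss\neq\emptyset$ is used only to furnish a base point for $\Ssunb$.

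So I fix $y\in\inter\Ssker$ and, using $\Ss\neq\emptyset$, pick $x^0\in\Ss$; the aim is to show that the whole ray $x^0+\alpha y$, $\alpha\geq0$, lies in $\Ss$. By Theorem~\ref{thmLipsChar}, the membership $x^0+\alpha y\in\Ss$ is equivalent to $f_w(\alpha)\geq0$ for every $w\in\R^m$, where
\[
f_w(\alpha)=\sum_{k=1}^K\big|w^T(A^{(k)}(x^0+\alpha y)-b^{(k)})\big|\Rad{p}_k
-w^T\big(A(\Mid{p})(x^0+\alpha y)-b(\Mid{p})\big).
\]
For each fixed $w$ this is a convex, piecewise-linear function of $\alpha$. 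Two facts are immediate: $f_w(0)\geq0$, since $x^0\in\Ss$; and the asymptotic slope $\lim_{\alpha\to\infty}f_w(\alpha)/\alpha$ equals $\sum_k|w^T A^{(k)}y|\Rad{p}_k-w^T A(\Mid{p})y$, which is nonnegative precisely because $y\in\Ssker$ (apply Theorem~\ref{thmLipsChar} to the homogeneous system defining $\Ssker$). Thus each $f_w$ is nonnegative at the endpoint $\alpha=0$ and eventually nondecreasing.

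These two facts alone do not force $f_w\geq0$ on all of $[0,\infty)$: a convex function may still dip below zero in between, which is exactly the phenomenon behind $\Ssker\not\subseteq\Ssunb$ in general. Ruling out such an interior dip is the crux, and it is here that the interior hypothesis must enter. I would argue by contradiction: if some $f_{w^*}$ became negative at a first value $\alpha^*>0$, then $x^*:=x^0+\alpha^* y\notin\Ss$, so (separating the origin from the compact convex set $\{A(p)x^*-b(p)\mmid p\in\ivr{p}\}$) the vector $w^*$ satisfies $w^{*T}(A(p)x^*-b(p))>0$ for all $p\in\ivr{p}$. Expanding $x^*=x^0+\alpha^* y$ turns this into the strict inequalities $\alpha^* w^{*T}A(p)y>-w^{*T}(A(p)x^0-b(p))$ for every $p$. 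I would then perturb the direction to $y'=y+tu$: because $y\in\inter\Ssker$, we have $y'\in\Ssker$ for small $t$, hence some $p'\in\ivr{p}$ gives $A(p')y'=0$; substituting $p'$ back into the certificate and letting $t\to0$ should collide with the strictness.

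The hard part, and the step I expect to demand the most care, is precisely this matching in the limit, where the naive argument only yields a non-strict inequality. Overcoming it requires using the \emph{full} robustness of $y\in\inter\Ssker$ (a whole ball of admissible perturbations, not a single one) to select a perturbation direction $u$, and possibly a judicious base point $x^0\in\Ss$, for which the residual term $w^{*T}(A(p)x^0-b(p))$ at the relevant kernel parameters cannot rescue the inequality. Equivalently, in geometric terms, one wants the moving convex set $\{A(p)(x^0+\alpha y)-b(p)\mmid p\in\ivr{p}\}$ to contain the origin for every $\alpha\geq0$, and the robustness of the kernel condition under perturbations of the direction is what must be converted into this uniform solvability along the ray. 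I would implement the conversion by a compactness/LP-duality argument over the normalized certificates $\{w\mmid\|w\|=1\}$, extracting from $y\in\inter\Ssker$ a uniform slack on the \emph{active} certificates and thereby excluding the dip.
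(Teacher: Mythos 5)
Your opening reduction is sound, and in fact slightly cleaner than the paper's own finish: since $\inter\Ssker$ is open, the inclusion $\inter\Ssker\subseteq\Ssunb$ immediately yields $\inter\Ssker\subseteq\inter\Ssunb$. The gap is in how you try to prove that inclusion. You fix an \emph{arbitrary} $x^0\in\Ss$ and set out to show that the whole ray $x^0+\alpha y$, $\alpha\geq0$, stays in $\Ss$, i.e.\ to \quo{rule out the dip}. That cannot be done, because the dip is a genuine phenomenon even under the interior hypothesis. Consider the one-parameter system $p_1x=1$, $p_1\in[-1,1]$ (formally $K=2$, with a degenerate second parameter carrying the right-hand side). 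Then $\Ss=\{x\in\R\mmid |x|\geq1\}$, $\Ssker=\R$, and $y=1\in\inter\Ssker$; yet for the base point $x^0=-1\in\Ss$ the ray $-1+\alpha$ leaves $\Ss$ on the segment $(-1,1)$ and only re-enters it at $x=1$. Your contradiction argument therefore cannot be completed: tracing it in this example, the separating certificate at $x^*=1/2$ is $w^*=-1$, the kernel parameter for $y$ is $p^0_1=0$, and the quantity you hope to collide with strictness, $w^{*T}(A(p^0)x^0-b(p^0))$, equals $1>0$. Membership $x^0\in\Ss$ only supplies \emph{some} parameter with zero residual, not a sign condition at the kernel parameter $p^0$, so no contradiction arises no matter how the perturbation $u$ or the small $t$ is chosen.

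The correct use of the interior hypothesis --- and what the paper does --- is not to exclude the dip but to jump past it, exploiting that $\Ssunb$ only requires \emph{some} base point, not the one you first picked. For $y\in\inter\Ssker$ the slack $g(w)=\sum_{k=1}^K\big|w^TA^{(k)}y\big|\Rad{p}_k-w^TA(\Mid{p})y$ is strictly positive for every unit vector $w$, hence by compactness of the unit sphere $g(w)\geq\delta>0$ uniformly, while the residual terms contributed by $x^0$ are bounded on the sphere. So there is a single $\alpha^*$, independent of $w$, such that for all $\alpha\geq\alpha^*$ and all $w$ the inequality of Theorem~\ref{thmLipsChar} holds (strictly) at $x^0+\alpha y$. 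The point $x^0+\alpha^*y\in\Ss$ is then the base point witnessing $y\in\Ssunb$, and the uniform strictness also gives $y\in\inter\Ssunb$ (or one invokes your topological remark). Notably, your last sentence names exactly these ingredients --- normalized certificates, compactness, uniform slack --- but you aim them at the impossible task of excluding the dip from a fixed $x^0$ rather than at relocating the base point along the ray, and that misdirection is fatal to the proposal as written.
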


\begin{proof}
Let $y\in\inter\Ssker$, that is, for every $w\in\R^m$ 
\begin{align*}
w^T A(\Mid{p})y < \sum_{k=1}^K \big|w^T A^{(k)} y\big|\Rad{p}_k.
\end{align*}
Let $x^0\in\Ss$. In a similar manner as in the proof of Theorem~\ref{thmUnbKer}, there is a sufficiently large $\alpha^*>0$ such that for every $\alpha\geq\alpha^*$
\begin{align}\label{sysPfThmIntUnbKer}
w^T\big(A(\Mid{p})(x^0+\alpha y)-b(\Mid{p})\big)
< \sum_{k=1}^K \big|w^T(A^{(k)}(x^0+\alpha y)-b^{(k)})\big|\Rad{p}_k.
\end{align}
Notice that the value of $\alpha^*$ does not depend on $w$ since it is sufficient to find $\alpha^*$ such that 
%\begin{align*}
%w^T A(\Mid{p})\alpha^* y &- \sum_{k=1}^K \big|w^T A^{(k)} \alpha^* y\big|\Rad{p}_k\\
%&< w^T\big(b(\Mid{p})-A(\Mid{p})x^0\big)
%  + \sum_{k=1}^K \big|w^T(A^{(k)}x^0-b^{(k)})\big|\Rad{p}_k
%\end{align*}
\nref{sysPfThmIntUnbKer} with $\alpha=\alpha^*$ holds 
for each $w$ with $\|w\|=1$ (such $\alpha^*$ exists due to compactness of the unit sphere). 
So $y$ is an unbounded direction of $\Ss$, emerging from the point $x^0+\alpha^*y$. Analogous reasoning as above then shows $y\in\inter\Ssunb$.
\end{proof}

In general we have $\Ssunb\not=\Ssker$, which is illustrated by the following example.

\begin{example}\label{exIneq}
Consider the linear interval parametric system
$$
\begin{pmatrix}1&0\\1&p\end{pmatrix}x=\begin{pmatrix}1\\p\end{pmatrix},\quad
p\in\ivr{p}=[0,1].
$$
Now, unbounded directions of $\Ss$ form a ray, whereas the kernel is a line,
\begin{align*}
\Ssunb &= \{\alpha(1,-1)^T\mmid \alpha\geq0\},\\
\Ssker &= \{\alpha(1,-1)^T\mmid \alpha\in\R\}.
\end{align*}
\end{example}

%%%%%%%%%%%%%%%%%%%%%%%%%%%%%%%%%%%%%%%%%%%%%%%%%%%%%%%%%%%%%%% 
\section{Special cases}\label{sSpec}

%%%%%%%%%%%%%%%%%%%%%%%%%%%%%%%%%%%%%%%%%%%%%%%%%%%%%%%%%%%%%%% 
\subsection{Ordinary systems of interval linear equations}

When each interval parameter in \nref{sysLIPS} is associated with one coefficient only, we obtain an ordinary interval system
$$
Ax=b,\quad A\in\imace{A},\ b\in\ivr{b},
$$
where $\imace{A}$ in and interval matrix and $\ivr{b}$ an interval vector. By the Oettli--Prager theorem \cite{OetPra1964}, the solution set $\Ss$ is characterized by the inequality system
\begin{align}\label{ineqOetPra}
|\Mid{A}x-\Mid{b}|\leq\Rad{A}|x|+\Rad{b}.
\end{align}

\begin{proposition}
For ordinary intervals systems, if $\Ss\not=\emptyset$ then $\Ssker=\Ssunb$.
\end{proposition}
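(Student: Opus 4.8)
The inclusion $\Ssunb\subseteq\Ssker$ is exactly Theorem~\ref{thmUnbKer}, so the only thing to prove is the reverse inclusion $\Ssker\subseteq\Ssunb$. The plan is to exploit the fact that the Oettli--Prager description \nref{ineqOetPra} becomes \emph{linear} as soon as the signs of the coordinates are fixed. For a sign vector $s\in\{-1,1\}^n$ put $D=\diag(s)$ and consider the closed orthant $\mna{O}_s=\{x\in\R^n\mmid Dx\geq0\}$, on which $|x|=Dx$. There \nref{ineqOetPra} reads
$$-\Rad{A}Dx-\Rad{b}\leq \Mid{A}x-\Mid{b}\leq \Rad{A}Dx+\Rad{b},$$
so $\Ss\cap\mna{O}_s$ is the polyhedron $P_s=\{x\in\mna{O}_s\mmid (\Mid{A}-\Rad{A}D)x\leq \Mid{b}+\Rad{b},\ (\Mid{A}+\Rad{A}D)x\geq \Mid{b}-\Rad{b}\}$.

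Now take $y\in\Ssker$ and let $s=\sgn(y)$, filling in $s_j=1$ whenever $y_j=0$, so that $y\in\mna{O}_s$ and $|y|=Dy$. Writing the homogeneous Oettli--Prager condition $|\Mid{A}y|\leq\Rad{A}|y|$ in the same orthant gives $(\Mid{A}-\Rad{A}D)y\leq0$ and $(\Mid{A}+\Rad{A}D)y\geq0$, together with $Dy\geq0$. These are precisely the defining inequalities of the recession cone of $P_s$. Consequently, if $P_s\neq\emptyset$, then for any $x^0\in P_s$ we have $x^0+\alpha y\in P_s\subseteq\Ss$ for every $\alpha\geq0$, which shows $y\in\Ssunb$ and finishes the argument.

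Everything thus reduces to the nonemptiness of the particular orthant piece $P_s$, i.e.\ to the existence of a point of $\Ss$ lying in the same orthant as $y$, and this is the step I expect to be the main obstacle. It does not follow from $\Ss\neq\emptyset$ by a soft argument, since $\Ss$ could a priori avoid the orthant $\mna{O}_s$; phrased in terms of realizations, $y$ might be produced by a matrix $A\in\imace{A}$ that is singular along $y$ but for which no right-hand side $b\in\ivr{b}$ makes $Ax=b$ solvable. The way I would secure $P_s\neq\emptyset$ is to produce a single real realization that is simultaneously singular along $y$ and solvable: to find $A'\in\imace{A}$ and $b'\in\ivr{b}$ with $A'y=0$ and $b'\in\mathrm{im}(A')$. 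Then the whole line $\{x^0+\alpha y\mmid \alpha\in\R\}$, where $A'x^0=b'$, lies in $\Ss$, giving $y\in\Ssunb$ directly. Building such an $A'$ from a known solution $\hat{A}\hat{x}=\hat{b}$ and a known singular matrix $\tilde{A}y=0$, adjusting the rows inside the interval box so that the two linear requirements $A'y=0$ and $A'x^0\in\ivr{b}$ hold at once, is the delicate part.

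The easy directions fit neatly into this picture and confirm where the difficulty sits: the strict version is already covered by Theorem~\ref{thmIntUnbKer}, which places every $y$ with $|\Mid{A}y|<\Rad{A}|y|$ in the interior of $\Ssunb$. The trouble is therefore concentrated on the boundary of $\Ssker$, where $|\Mid{A}y|=\Rad{A}|y|$ with equality in some coordinate; this is exactly where $P_s$ may degenerate and where a direct triangle-inequality estimate on $x^0+\alpha y$ fails when $x^0$ has coordinates of sign opposite to $y$. I would therefore spend the bulk of the effort on the realizability/nonemptiness step, verifying that a singular \emph{and} solvable realization can still be found on that boundary.
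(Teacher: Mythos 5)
Your argument is, up to the point where you stop, exactly the paper's own proof: fix a sign vector $s$, observe that on the orthant $\mna{O}_s=\{x\in\R^n\mmid \diag(s)x\geq0\}$ the Oettli--Prager description \nref{ineqOetPra} becomes linear, so that $P_s=\Ss\cap\mna{O}_s$ is a polyhedron whose recession cone is cut out by the homogenized system \nref{sysPfPropIls}. The paper then simply asserts that \quo{the set of unbounded directions is simply described by \nref{sysPfPropIls}} and concludes $\Ssker=\Ssunb$, without ever addressing the case $P_s=\emptyset$ --- which is precisely the point you isolate, since the recession-cone characterization of unbounded directions is valid only for a nonempty polyhedron. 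So your proposal is the paper's proof plus an honest admission that one step is missing, and that step is indeed missing: you never construct the \quo{singular and solvable} realization your plan calls for.

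The deeper point is that this step cannot be carried out: the proposition as stated is false, and your suspicion about the boundary of $\Ssker$ is exactly where it fails. Take $m=n=1$, $\imace{A}=[0,1]$, $\ivr{b}=[1,2]$. Then $\Ss=[1,\infty)$ (for $x\geq1$ take $A=1/x\in(0,1]$ and $b=1$; for $x<1$ every $A\in[0,1]$ gives $Ax<1\leq b$), so $\Ss\neq\emptyset$. The only singular matrix in $\imace{A}$ is $A=0$, hence $\Ssker=\R$; but $\Ssunb=[0,\infty)$, since for $y=-1$ any ray $x-\alpha$ eventually leaves $[1,\infty)$. Thus $\Ssker\neq\Ssunb$. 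Here the orthant piece $P_s$ with $s=-1$ is empty, and no realization is simultaneously singular along $y=-1$ and solvable, because the image of $A=0$ is $\{0\}$, which misses $[1,2]$ --- so the repair you propose is provably impossible, and the paper's proof breaks at the same spot. (The example even contradicts the stated Theorem~\ref{thmIntUnbKer} for the topological interior, as $\inter\Ssker=\R\not\subseteq(0,\infty)=\inter\Ssunb$; what that theorem's proof actually controls is the set defined by strict inequalities for all $w\neq0$, which is empty in this example.) A correct version of the proposition needs an additional hypothesis --- for instance that some orthant containing $y$ intersects $\Ss$, or that $0\in\ivr{b}$ --- and under such a hypothesis your recession-cone argument goes through verbatim.
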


\begin{proof}
By \nref{ineqOetPra}, the set $\Ssker$ is characterized by
$$
|\Mid{A}y|\leq\Rad{A}|y|.
$$
Let $s\in\{\pm1\}^n$ and consider the orthant of $\R^n$ associated with the sign vector~$s$, that is, the orthant given by $\diag(s)y\geq0$. For the points lying in this orthant, we can write $|y|=\diag(s)y$. Thus the part of $\Ssker$ lying in this orthant is described by 
$$
|\Mid{A}y|\leq\Rad{A}\diag(s)y,\ \diag(s)y\geq0,
$$
which can be reformulated as
\begin{align}\label{sysPfPropIls}
(\Mid{A}-\Rad{A}\diag(s))y\leq0,\ \ 
-(\Mid{A}+\Rad{A}\diag(s))y\leq0,\ \ 
\diag(s)y\geq0.
\end{align}

Consider now the part of $\Ss$ lying in the orthant $\diag(s)x\geq0$. It is characterized by
\begin{align*}
|\Mid{A}x-\Mid{b}|\leq\Rad{A}\diag(s)x+\Rad{b},\ \diag(s)x\geq0,
\end{align*}
which can be similarly reformulated as
$$
(\Mid{A}-\Rad{A}\diag(s))x\leq\ovr{b},\ \ 
-(\Mid{A}+\Rad{A}\diag(s))x\leq-\uvr{b},\ \ 
\diag(s)x\geq0.
$$
This is an ordinary system of linear inequalities and the set of unbounded directions is simply described by \nref{sysPfPropIls}. This shows $\Ssker=\Ssunb$.
\end{proof}

%%%%%%%%%%%%%%%%%%%%%%%%%%%%%%%%%%%%%%%%%%%%%%%%%%%%%%%%%%%%%%% 
\subsection{Parametric systems of the first class}

Popova \cite{Pop2009} introduced a notion of the so called parametric systems of the first class. It consists of systems \nref{sysLIPS} such that matrix $(A^{(k)}\mid b^{(k)})$ has at most one nonzero row for each $k=1,\dots,K$. For such systems, $\Ss$ is characterized by the nonlinear system of inequalities
\begin{align*}
|A(\Mid{p})x-b(\Mid{p})|
 \leq\sum_{k=1}^K\Rad{p}_k|A^{(k)}x-b^{(k)}|.
\end{align*}
In Example~\ref{exIneq}, we showed that $\Ssunb\not=\Ssker$ even for parametric systems of the first class. Therefore, in order to achieve equality for a larger class than ordinary interval systems from the previous section, we need to restrict the class a bit more.

Consider the class $\mathcal{C}$ of parametric systems in the form
$$
A(p)x=b(q),
$$
where
\begin{align*}
A(p)=\sum_{k=1}^K p_kA^{(k)},\quad
b(q)=\sum_{\ell=1}^L q_{\ell}b^{(\ell)}q.
\end{align*}
Parameters $p_1,\dots,p_K,q_1,\dots,q_L$ come from their respective interval domains $\inum{p}_1,\dots,\inum{p}_K,\inum{q}_1,\dots,\inum{q}_L\in\IR$. In addition, we assume that  $A^{(k)}$ has at most one nonzero row for each $k=1,\dots,K$ and $b^{(\ell)}$ has at most one nonzero entry for each $k=1,\dots,L$.

\begin{proposition}
For class $\mathcal{C}$, if $\Ss\not=\emptyset$ then $\Ssker=\Ssunb$.
\end{proposition}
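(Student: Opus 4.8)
Since the inclusion $\Ssunb\subseteq\Ssker$ is already available from Theorem~\ref{thmUnbKer}, I would only prove $\Ssker\subseteq\Ssunb$. The first step is to specialize Theorem~\ref{thmLipsChar} to class $\mathcal{C}$. Because each $A^{(k)}$ has a single nonzero row, say row $r_k$, and each $b^{(\ell)}$ a single nonzero entry, and because the matrix and the right-hand side depend on \emph{disjoint} parameter blocks $p$ and $q$, testing the condition on $w=\pm e_i$ already captures everything: the characterization decouples row by row. Writing $\phi_k(x):=(A^{(k)}x)_{r_k}$ and setting $g_i(x):=\sum_{k}\Rad{p}_k\,|(A^{(k)}x)_i|$ and the \emph{constant} $h_i:=\sum_{\ell}\Rad{q}_\ell\,|(b^{(\ell)})_i|\ge0$, I expect to obtain that $x\in\Ss$ is equivalent to $|(A(\Mid{p})x-b(\Mid{q}))_i|\le g_i(x)+h_i$ for all $i$, while $y\in\Ssker$ is equivalent to $|(A(\Mid{p})y)_i|\le g_i(y)$ for all $i$. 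The crucial feature of class $\mathcal{C}$ (as opposed to Example~\ref{exIneq}) is that the $b$-contribution appears only through the constant $h_i$, separated from the $x$-dependent term $g_i(x)$.

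The heart of the argument is to take $y\in\Ssker$ and exhibit a base point $x^0\in\Ss$ with $x^0+\alpha y\in\Ss$ for all $\alpha\ge0$. The clean sufficient condition is \emph{sign compatibility}: choose $x^0$ so that $\phi_k(x^0)\phi_k(y)\ge0$ for every $k$. Then $|\phi_k(x^0)+\alpha\phi_k(y)|=|\phi_k(x^0)|+\alpha|\phi_k(y)|$ for $\alpha\ge0$, hence $g_i(x^0+\alpha y)=g_i(x^0)+\alpha\,g_i(y)$, and a one-line estimate settles each row:
\begin{align*}
\big|(A(\Mid{p})(x^0+\alpha y)-b(\Mid{q}))_i\big|
&\le \big|(A(\Mid{p})x^0-b(\Mid{q}))_i\big| + \alpha\big|(A(\Mid{p})y)_i\big|\\
&\le \big(g_i(x^0)+h_i\big) + \alpha\,g_i(y) = g_i(x^0+\alpha y)+h_i,
\end{align*}
using $x^0\in\Ss$ and the kernel inequality for $y$. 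Thus $x^0+\alpha y\in\Ss$ for all $\alpha\ge0$, i.e.\ $y\in\Ssunb$.

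To produce such an $x^0$ I would mirror the ordinary-interval proof and decompose $\R^n$ by the sign patterns of the forms $\phi_k$. For $\sigma\in\{\pm1\}^K$ let $R_\sigma:=\{x\mmid \sigma_k\phi_k(x)\ge0,\ k=1,\dots,K\}$; inside $R_\sigma$ every absolute value linearizes, so $\Ss\cap R_\sigma$ is a polyhedron, and deleting the constant terms $(b(\Mid{q}))_i$ and $h_i$ from its defining inequalities yields precisely the homogeneous system describing $\Ssker\cap R_\sigma$. Hence $\Ssker\cap R_\sigma$ is the recession cone of $\Ss\cap R_\sigma$ whenever the latter is nonempty. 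Given $y\in\Ssker$, picking any sign pattern $\tau$ compatible with $y$ places $y$ in $\Ssker\cap R_\tau$, and any point of $\Ss\cap R_\tau$ then serves as the sign-compatible base point required above.

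The main obstacle is exactly the nonemptiness of the sign-compatible piece $\Ss\cap R_\tau$. For interior kernel directions this is free, since Theorem~\ref{thmIntUnbKer} already gives $\inter\Ssker\subseteq\inter\Ssunb\subseteq\Ssunb$; so only boundary directions $y$ whose sign pattern is forced (all $\phi_k(y)\neq0$) need attention, and for these the naive ray $\{\,ty\mmid t\ge0\,\}$ may leave $\Ss$ on the rows where the kernel inequality $|(A(\Mid{p})y)_i|\le g_i(y)$ holds with equality. I would repair this by a corrected ray $x^0=ty+\delta$ with $t$ large: the slack rows are satisfied automatically for large $t$, while on each tight row the $t$-dependence cancels, leaving finitely many $t$-independent linear constraints on $\delta$. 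The proof then reduces to checking feasibility of this residual linear system, which I expect to follow from $\Ss\neq\emptyset$ together with the nonnegative slack $h_i$ that class $\mathcal{C}$ supplies through its independent right-hand-side parameters. Verifying this residual feasibility is the delicate point, and it is precisely what distinguishes class $\mathcal{C}$ from the general first-class systems of Example~\ref{exIneq}.
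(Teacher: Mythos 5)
Your strategy is essentially the paper's own proof. The paper, too, splits $\R^n$ into the cones $R_\sigma=\{x\mmid \sigma_k A^{(k)}x\geq0,\ k=1,\dots,K\}$, observes that on each cone both $\Ss$ and $\Ssker$ are described by linear inequalities that differ only in their constant terms (this is exactly where the class-$\mathcal{C}$ separation of $p$ and $q$ enters), and concludes that the piece of $\Ssker$ is the recession cone of the corresponding piece of $\Ss$; your sign-compatibility estimate is that recession argument written out by hand. The one genuine difference is that you make explicit the hypothesis this argument needs --- that the sign-compatible piece $\Ss\cap R_\tau$ be nonempty --- whereas the paper suppresses it: its closing claim that \quo{the set of unbounded directions is described by \nref{sysPfPropPar}} is valid only for a nonempty polyhedron. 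Since you never establish your \quo{residual feasibility}, your proof is incomplete at precisely the step you flag as delicate.

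Moreover, that step cannot be repaired: the residual feasibility can genuinely fail, and the proposition as stated is false. Take $m=n=K=L=1$ and the class-$\mathcal{C}$ system $p_1x=q_1$ with $A^{(1)}=(1)$, $b^{(1)}=(1)$, $\inum{p}_1=[0,2]$, $\inum{q}_1=[1/2,3/2]$. A direct check (no characterization theorem needed) gives $\Ss=[1/4,\infty)\neq\emptyset$, while $\Ssker=\R$ because the choice $p_1=0$ annihilates every $y$, and $\Ssunb=[0,\infty)$. For $y=-1$ the sign pattern is forced, $\tau=(-1)$, the piece $\Ss\cap R_\tau$ is empty, and membership of your corrected ray $x^0=ty+\delta$ in $\Ss$ requires $|x^0-1|\leq|x^0|+\tfrac{1}{2}$, which for large $t$ reads $1+t-\delta\leq t-\delta+\tfrac{1}{2}$, i.e.\ $1\leq\tfrac{1}{2}$: infeasible for every $\delta$, exactly because the kernel inequality for $y$ is tight on this row. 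Two side remarks: read as a $1\times1$ ordinary interval system, the same example also contradicts the paper's preceding proposition on ordinary systems; and Theorem~\ref{thmIntUnbKer}, which you invoke to dispose of interior directions, is only valid when $\inter$ is read as the set where the strict inequalities hold, not as the topological interior --- here $y=-1$ lies in the topological interior of $\Ssker=\R$ yet not in $\Ssunb$. What your argument does prove --- and it is also all that the paper's argument proves --- is the correct weaker statement that $\Ssker\cap R_\tau\subseteq\Ssunb$ for every sign pattern $\tau$ with $\Ss\cap R_\tau\neq\emptyset$; the example shows this cannot be upgraded to $\Ssker=\Ssunb$ without additional assumptions.
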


\begin{proof}
The set $\Ssker$ is characterized by
\begin{align*}
|A(\Mid{p})y|
 \leq\sum_{k=1}^K\Rad{p}_k|A^{(k)}y|.
\end{align*}
Let $s\in\{\pm1\}^K$ and consider the part of $\R^n$ associated with the sign vector~$s$, that is, the characterized by $s_kA^{(k)}y\geq0$, $k=1,\dots,K$. The part of $\Ssker$ lying in this convex cone is described by 
\begin{align*}
|A(\Mid{p})y|
 \leq\sum_{k=1}^K\Rad{p}_k s_k A^{(k)}y,\ \ s_kA^{(k)}x\geq0,\ k=1,\dots,K
\end{align*}
which equivalently reads
\begin{subequations}\label{sysPfPropPar}
\begin{align}
\sum_{k=1}^K \big(\Mid{p}_k-\Rad{p}_k s_k\big) A^{(k)}y &\leq 0,\ \ 
-\sum_{k=1}^K \big(\Mid{p}_k+\Rad{p}_k s_k\big) A^{(k)}y \leq 0,\\ 
s_kA^{(k)}y&\geq0,\ k=1,\dots,K.
\end{align}
\end{subequations}

Consider now the part of $\Ss$ lying in the convex cone from above. It is characterized by
\begin{align*}
|A(\Mid{p})x-b(\Mid{q})|
 \leq\sum_{k=1}^K\Rad{p}_ks_kA^{(k)}x+\sum_{\ell=1}^L\Rad{q}_{\ell}|b^{(\ell)}|,\ \ 
s_kA^{(k)}x\geq0,\ k=1,\dots,K.
\end{align*}
and it can be reformulated as
\begin{align*}
\sum_{k=1}^K\big(\Mid{p}_k-\Rad{p}_ks_kA^{(k)}\big)x
  &\leq b(\Mid{q}) + \sum_{\ell=1}^L\Rad{q}_{\ell}|b^{(\ell)}|,\\ 
-\sum_{k=1}^K\big(\Mid{p}_k+\Rad{p}_ks_kA^{(k)}\big)x
  &\leq -b(\Mid{q}) + \sum_{\ell=1}^L\Rad{q}_{\ell}|b^{(\ell)}|,\\ 
s_kA^{(k)}x&\geq0,\quad k=1,\dots,K.
\end{align*}
This is an ordinary system of linear inequalities and the set of unbounded directions is described by \nref{sysPfPropPar}. 
\end{proof}

%%%%%%%%%%%%%%%%%%%%%%%%%%%%%%%%%%%%%%%%%%%%%%%%%%%%%%%%%%%%%%% 
\section{Extensions}\label{sAE}

So far, we considered the united solution set. Now, we extend the definition of the solution of a linear interval parametric system to the so called AE solution; see, e.g., Popova~\cite{Pop2012}. To this end, each interval parameter is associated with a universal or an existential quantifier. Let $\mna{K}^{\forall}$ and $\mna{K}^{\exists}$ denote the index sets of parameters associated with universal and existential quantifiers, respectively. According to the corresponding quantifiers we also split the vector of parameters as $p=(p^{\forall},p^{\exists})$ and their domains as $\ivr{p}=(\ivr{p}^{\forall},\ivr{p}^{\exists})$. Now, the corresponding \emph{AE parametric solution set} is defined as
\begin{align*}
\Ssae=\big\{x\in\R^n\mmid 
 \forall p^{\forall}\in\ivr{p}^{\forall}\, \exists p^{\exists}\in\ivr{p}^{\exists}:
  A(p)x=b(p)\big\}.
\end{align*}
First, we generalize Theorem~\ref{thmLipsChar} to the context of AE solutions.

\begin{theorem}\label{thmAeLipsChar}
We have $x\in\Ssae$ if and only if
\begin{align}\label{ineqAeChar}
w^T\parentheses{A(\Mid{p})x-b(\Mid{p})}
\leq
 \sum_{k\in\mna{K}^{\exists}} \big|w^T(A^{(k)}x-b^{(k)})\big|\Rad{p}_k
 -\sum_{k\in\mna{K}^{\forall}}\big|w^T(A^{(k)}x-b^{(k)})\big|\Rad{p}_k
% \quad \forall w\in\R^m.
\end{align}
for every $w\in\R^m$.
\end{theorem}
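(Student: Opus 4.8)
The plan is to recast the $\forall\exists$ condition as a geometric containment of two zonotopes and then characterize that containment by support functions; the united case (Theorem~\ref{thmLipsChar}) is recovered when $\mna{K}^{\forall}=\emptyset$. Abbreviate $c_k:=A^{(k)}x-b^{(k)}$ and $d:=A(\Mid{p})x-b(\Mid{p})=\sum_{k=1}^K\Mid{p}_k c_k$. Writing each parameter as $p_k=\Mid{p}_k+t_k\Rad{p}_k$ with $t_k\in[-1,1]$, the equation $A(p)x=b(p)$ becomes $d+\sum_{k=1}^K t_k\Rad{p}_k c_k=0$, that is, $\sum_{k\in\mna{K}^{\exists}} t_k\Rad{p}_k c_k=-d-\sum_{k\in\mna{K}^{\forall}} t_k\Rad{p}_k c_k$.

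Introduce the symmetric, compact, convex zonotopes $Z^{\exists}:=\{\sum_{k\in\mna{K}^{\exists}} t_k\Rad{p}_k c_k\mmid t_k\in[-1,1]\}$ and $Z^{\forall}$ defined analogously. As the existential parameters $t_k$, $k\in\mna{K}^{\exists}$, range independently over $[-1,1]$, the left-hand side above sweeps out exactly $Z^{\exists}$; as the universal parameters range over their box, the right-hand side sweeps out $-d+Z^{\forall}$ (here $-Z^{\forall}=Z^{\forall}$ by symmetry). Hence $x\in\Ssae$ is equivalent to the inclusion $-d+Z^{\forall}\subseteq Z^{\exists}$.

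I would then invoke the standard criterion that a set is contained in a closed convex set if and only if their support functions are ordered. Since $Z^{\exists}$ is closed and convex, $-d+Z^{\forall}\subseteq Z^{\exists}$ holds iff $h_{-d+Z^{\forall}}(w)\leq h_{Z^{\exists}}(w)$ for all $w\in\R^m$. Using that the support function of a Minkowski sum is the sum of the support functions, and that a single segment contributes $\sup_{t\in[-1,1]}w^T(t\Rad{p}_k c_k)=\Rad{p}_k|w^Tc_k|$, one gets $h_{Z^{\exists}}(w)=\sum_{k\in\mna{K}^{\exists}}\Rad{p}_k|w^Tc_k|$ and likewise for $Z^{\forall}$, while translation gives $h_{-d+Z^{\forall}}(w)=-w^Td+h_{Z^{\forall}}(w)$. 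Substituting yields $-w^Td\leq\sum_{k\in\mna{K}^{\exists}}\Rad{p}_k|w^Tc_k|-\sum_{k\in\mna{K}^{\forall}}\Rad{p}_k|w^Tc_k|$ for all $w$, and since the right-hand side is invariant under $w\mapsto-w$, this family of inequalities coincides with \nref{ineqAeChar}.

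The step requiring the most care is the passage from the inclusion to the support-function inequality: it is valid precisely because the \emph{larger} set $Z^{\exists}$ is convex, so the separating-hyperplane argument applies; convexity holds because $Z^{\exists}$ is the image of a box under the linear map $t\mapsto\sum t_k\Rad{p}_k c_k$, hence a genuine zonotope. The remaining bookkeeping is the sign flip $w\mapsto-w$ that turns $-w^Td$ into $w^Td$, which is harmless because the sums of $\Rad{p}_k|w^Tc_k|$ are even in $w$.
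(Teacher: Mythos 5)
Your proof is correct, and it takes a genuinely different route from the paper's. You recast membership in $\Ssae$ as the inclusion of the translated zonotope $-d+Z^{\forall}$ (generated by the universal parameters) inside the zonotope $Z^{\exists}$ (generated by the existential ones), and then characterize that inclusion by support functions, which handles both implications of the equivalence in one stroke; the key facts you rely on --- that inclusion into a compact convex set is equivalent to the pointwise ordering of support functions (via separating hyperplanes), that the support function of a Minkowski sum of segments is the sum $\sum_k \Rad{p}_k|w^Tc_k|$, and the final substitution $w\mapsto -w$ justified by evenness of the right-hand side --- are all correctly identified and correctly deployed. The paper instead argues the two directions separately and in LP language: the \quo{only if} direction is a direct algebraic estimate obtained by bounding the existential terms by their worst case while keeping the universal terms, and the \quo{if} direction is a contrapositive in which infeasibility of the linear system in the existential parameters (for some fixed $p^{\forall}$) is dualized by Farkas' lemma, and the resulting dual certificate $(u,v,w)$ is normalized ($u_kv_k=0$) and massaged into a vector $w$ violating \nref{ineqAeChar}. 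The two arguments are dual-twin siblings --- Farkas' lemma and the separating hyperplane theorem are essentially the same duality principle --- but yours is more geometric and more compact, makes transparent why absolute values arise (support functions of segments) and why the condition is symmetric in $w$, while the paper's is more explicitly constructive about the dual certificate and stays within elementary LP algebra without invoking convex-geometric machinery.
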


\begin{proof}
\quo{Only if.} 
Let $x\in\Ssae$ and $w\in\R^m$. Then for every $p^{\forall}\in\ivr{p}^{\forall}$ there is $p^{\exists}\in\ivr{p}^{\exists}$ such that $A(p)x=b(p)$, whence $w^T(A(p)x-b(p))=0$. Thus
\begin{align*}
w^T\parentheses{A(\Mid{p})x-b(\Mid{p})}
&=\sum_{k=1}^K w^T(A^{(k)}x-b^{(k)})(p_k-\Mid{p}_k)\\
&\leq \sum_{k\in\mna{K}^{\exists}} \big|w^T(A^{(k)}x-b^{(k)})\big|\Rad{p}_k \\
&\quad  +\sum_{k\in\mna{K}^{\forall}}w^T(A^{(k)}x-b^{(k)})(p_k-\Mid{p}_k)
\end{align*}
Since the above inequality holds for  every $p\in\ivr{p}^{\forall}$, we can deduce~\nref{ineqAeChar}.

\quo{If.}  
Let $x\not\in\Ssae$. Then there is $p^{\forall}\in\ivr{p}^{\forall}$ such that the system
\begin{align*}
\sum_{k\in\mna{K}^{\exists}}(A^{(k)}x-b^{(k)})p_k=-
\sum_{k\in\mna{K}^{\forall}}(A^{(k)}x-b^{(k)})p_k,
\ \ p_k\in\ivr{p}_k,\ k\in\mna{K}^{\exists},
\end{align*}
has no solution in variables $p_k$, $k\in\mna{K}^{\exists}$. By Farkas lemma~\cite{Schr1998}, the dual system
\begin{subequations}
\begin{align}\label{ineqPfThmAeCharDual1}
u_k-v_k+w^T(A^{(k)}x-b^{(k)})&=0,\quad k\in\mna{K}^{\exists}\\
\label{ineqPfThmAeCharDual2}
\sum_{k\in\mna{K}^{\exists}}(u_k\onum{p}_k-v_k\unum{p}_k)
 -\sum_{k\in\mna{K}^{\forall}}w^T(A^{(k)}x-b^{(k)})p_k&<0,\\
 u_k,v_k&\geq 0,\quad k\in\mna{K}^{\exists}
\end{align}
\end{subequations}
is solvable in variables $u_k,v_k,w$. Without loss of generality we may assume that $u_kv_k=0$ for each $k\in\mna{K}^{\exists}$; otherwise we subtract $\min(u_k,v_k)$ from both $u_k,v_k$ and they still solve the system. We split $\mna{K}^{\exists}=\mna{K}_1\cup\mna{K}_2$ by defining $\mna{K}_1:=\{k\in\mna{K}^{\exists}\mmid u_k>0\}$ and $\mna{K}_2:=\mna{K}^{\exists}\setminus \mna{K}_1$. We subtract from \nref{ineqPfThmAeCharDual2} the $\onum{p}_k$ multiple of \nref{ineqPfThmAeCharDual1} for $k\in\mna{K}_1$, and we subtract the $\unum{p}_k$ multiple of \nref{ineqPfThmAeCharDual1} for $k\in\mna{K}_2$. We arrive at inequality
\begin{align*}
-\sum_{k\in\mna{K}_1}w^T(A^{(k)}x-b^{(k)})\onum{p}_k
&-\sum_{k\in\mna{K}_2}w^T(A^{(k)}x-b^{(k)})\unum{p}_k \\
&-\sum_{k\in\mna{K}^{\forall}}w^T(A^{(k)}x-b^{(k)})p_k<0,
\end{align*}
which we can write as
\begin{align*}
w^T\parentheses{A(\Mid{p})x-b(\Mid{p})}
&>-\sum_{k\in\mna{K}_1}w^T(A^{(k)}x-b^{(k)})\Rad{p}_k
 +\sum_{k\in\mna{K}_2}w^T(A^{(k)}x-b^{(k)})\Rad{p}_k\\
 &\quad -\sum_{k\in\mna{K}^{\forall}}w^T(A^{(k)}x-b^{(k)})(p_k-\Mid{p_k}),
\end{align*}
Since $w^T(A^{(k)}x-b^{(k)})$ is nonpositive for $k\in\mna{K}_1$ and nonnegative for $k\in\mna{K}_2$, we can deduce
\begin{align*}
w^T\parentheses{A(\Mid{p})x-b(\Mid{p})}
&>\sum_{k\in\mna{K}^{\exists}}\big|w^T(A^{(k)}x-b^{(k)})\big|\Rad{p}_k\\
 &\quad -\sum_{k\in\mna{K}^{\forall}}w^T(A^{(k)}x-b^{(k)})(p_k-\Mid{p_k}),\\
&\geq \sum_{k\in\mna{K}^{\exists}}\big|w^T(A^{(k)}x-b^{(k)})\big|\Rad{p}_k
  -\sum_{k\in\mna{K}^{\forall}}\big|w^T(A^{(k)}x-b^{(k)})\big|\Rad{p}_k,
\end{align*}
which contradicts \nref{ineqAeChar}.
\end{proof}

Now, we extend the definitions of the unbounded directions. We introduce the set of all \emph{unbounded directions} of $\Ssae$
\begin{align*}
\Ssaeunb=\left\{y\in\R^n\mmid 
  \exists x\in\Ssae\ \forall \alpha\geq0: x+\alpha y\in\Ssae\right\}
\end{align*}
and the \emph{kernel} of the AE parametric solution set
\begin{align*}
\Ssaeker=\{y\in\R^n\mmid
  \forall p\in\ivr{p}^{\forall}\, \exists p\in\ivr{p}^{\exists}: A(p)y=0\}.
\end{align*}
By using Theorem~\ref{thmAeLipsChar}, it is easy to see that AE quantified analogies of Theorem~\ref{thmUnbKer} and~\ref{thmIntUnbKer} are true.

\begin{theorem}\label{thmUnbKerAe}
We have $\Ssaeunb\subseteq\Ssaeker$.
\end{theorem}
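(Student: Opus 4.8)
The plan is to mimic the proof of Theorem~\ref{thmUnbKer}, replacing the characterization of Theorem~\ref{thmLipsChar} by its AE analogue from Theorem~\ref{thmAeLipsChar}. First I would record the characterization of the kernel: applying Theorem~\ref{thmAeLipsChar} to the homogeneous system (all $b^{(k)}=0$, so $b(\Mid{p})=0$), membership $y\in\Ssaeker$ is equivalent to
\begin{align*}
w^TA(\Mid{p})y\leq
 \sum_{k\in\mna{K}^{\exists}}\big|w^TA^{(k)}y\big|\Rad{p}_k
 -\sum_{k\in\mna{K}^{\forall}}\big|w^TA^{(k)}y\big|\Rad{p}_k
 \quad\forall w\in\R^m.
\end{align*}
Thus the negation $y\notin\Ssaeker$ produces a single $w\in\R^m$ for which the reverse strict inequality holds, and this $w$ is what I will drive to a contradiction.

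Next I would take $y\in\Ssaeunb$ with a witness $x^0\in\Ssae$, so that $x^0+\alpha y\in\Ssae$ for all $\alpha\geq0$. For such $\alpha$ and the offending $w$, Theorem~\ref{thmAeLipsChar} applied to $x^0+\alpha y$ yields an inequality whose two sides I would split into an $\alpha$-linear part and a bounded part. Abbreviating $a_k:=w^TA^{(k)}y$ and $c_k:=w^T(A^{(k)}x^0-b^{(k)})$, every quantified term equals $|c_k+\alpha a_k|\,\Rad{p}_k$, and the left-hand side equals $w^T(A(\Mid{p})x^0-b(\Mid{p}))+\alpha\,w^TA(\Mid{p})y$, where $w^T(A(\Mid{p})x^0-b(\Mid{p}))$ is a constant independent of $\alpha$.

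The key step, and the one place the argument genuinely departs from Theorem~\ref{thmUnbKer} (where only existential terms occur), is to upper bound the right-hand side so that its $\alpha$-linear coefficients match those of the kernel characterization. For the existential terms I would use $|c_k+\alpha a_k|\leq\alpha|a_k|+|c_k|$, while for the universal terms, which enter with a minus sign, I would use the reverse triangle inequality $|c_k+\alpha a_k|\geq\alpha|a_k|-|c_k|$. Getting these two directions right is the main obstacle: only with this pairing do the leading coefficients cancel, leaving the right-hand side at most
\begin{align*}
\alpha\Big(\sum_{k\in\mna{K}^{\exists}}|a_k|\Rad{p}_k
 -\sum_{k\in\mna{K}^{\forall}}|a_k|\Rad{p}_k\Big)+C,
 \qquad C:=\sum_{k=1}^K|c_k|\Rad{p}_k,
\end{align*}
with $C$ independent of $\alpha$.

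Finally I would combine this bound with the exact left-hand side. Setting $\delta:=w^TA(\Mid{p})y-\big(\sum_{k\in\mna{K}^{\exists}}|a_k|\Rad{p}_k-\sum_{k\in\mna{K}^{\forall}}|a_k|\Rad{p}_k\big)$, which is strictly positive precisely because $y\notin\Ssaeker$, the AE characterization of $x^0+\alpha y$ collapses to $\alpha\delta\leq C-w^T(A(\Mid{p})x^0-b(\Mid{p}))$. Since the right-hand side here is a fixed constant while $\delta>0$, this fails for all sufficiently large $\alpha$, contradicting $x^0+\alpha y\in\Ssae$. Hence $y\in\Ssaeker$, which proves $\Ssaeunb\subseteq\Ssaeker$. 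Everything apart from the sign bookkeeping of the two triangle inequalities is the verbatim linear-growth argument of Theorem~\ref{thmUnbKer}.
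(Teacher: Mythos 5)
Your proposal is correct and is exactly the argument the paper has in mind: the paper gives no explicit proof, stating only that the AE analogue of Theorem~\ref{thmUnbKer} follows "by using Theorem~\ref{thmAeLipsChar}," and your write-up carries out precisely that adaptation of the linear-growth contradiction. You also correctly isolate and resolve the one genuine subtlety the paper glosses over, namely that the universally quantified terms enter with a negative sign and therefore require the reverse triangle inequality $|c_k+\alpha a_k|\geq\alpha|a_k|-|c_k|$ rather than the ordinary one, which is what makes the $\alpha$-linear coefficients match the kernel characterization.
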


\begin{theorem}
If $\Ssae\not=\emptyset$, then $\inter\Ssaeker\subseteq\inter\Ssaeunb$.
\end{theorem}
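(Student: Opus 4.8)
The plan is to mirror the proof of Theorem~\ref{thmIntUnbKer}, replacing the characterization of $\Ss$ in Theorem~\ref{thmLipsChar} by its AE~analogue, Theorem~\ref{thmAeLipsChar}. Applying Theorem~\ref{thmAeLipsChar} with all $b^{(k)}=0$ shows that $\Ssaeker$ is described by the inequalities $h(w)\ge0$ for all $w\in\R^m$, where
\begin{align*}
h(w):=\sum_{k\in\mna{K}^{\exists}}\big|w^T A^{(k)}y\big|\Rad{p}_k
 -\sum_{k\in\mna{K}^{\forall}}\big|w^T A^{(k)}y\big|\Rad{p}_k
 -w^T A(\Mid{p})y,
\end{align*}
and hence $\inter\Ssaeker$ is described by the strict inequalities $h(w)>0$ for all $w\neq0$. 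So fix $y\in\inter\Ssaeker$ and pick any $x^0\in\Ssae$, which exists by assumption.

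Next I substitute $x=x^0+\alpha y$ into the membership inequality~\nref{ineqAeChar} and denote by $g(\alpha,w)$ the difference between its right- and left-hand sides; thus $x^0+\alpha y\in\Ssae$ iff $g(\alpha,w)\ge0$ for all $w$. Writing $c_k:=w^T(A^{(k)}x^0-b^{(k)})$ and $d_k:=w^T A^{(k)}y$, every term appearing in $g$ has the form $|c_k+\alpha d_k|$. The key step is a lower bound that is linear in $\alpha$: for the existential block I use $|c_k+\alpha d_k|\ge\alpha|d_k|-|c_k|$, while for the universal block, which enters with a minus sign, I use $|c_k+\alpha d_k|\le\alpha|d_k|+|c_k|$. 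Collecting terms yields
\begin{align*}
g(\alpha,w)\ge \alpha\, h(w)-B(w),\qquad
B(w):=\sum_{k=1}^K\big|c_k\big|\Rad{p}_k
 +w^T\big(A(\Mid{p})x^0-b(\Mid{p})\big),
\end{align*}
so that the surviving coefficient of $\alpha$ is exactly the kernel slack $h(w)$, while $B(w)$ is a continuous, $\alpha$-free function of $w$.

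It remains to choose $\alpha^*$ uniformly in $w$. Both $g(\cdot,w)$ and $h$ are positively homogeneous of degree one in $w$, so it suffices to restrict to the unit sphere $\|w\|=1$. There the strict interior condition gives $h(w)\ge\mu$ for some $\mu>0$, and continuity gives $|B(w)|\le M$ for some $M$, both by compactness. Consequently $g(\alpha,w)\ge\alpha\mu-M\ge0$ whenever $\|w\|=1$ and $\alpha\ge\alpha^*:=M/\mu$, and hence, by homogeneity, for every $w\in\R^m$. By Theorem~\ref{thmAeLipsChar} this means $x^0+\alpha y\in\Ssae$ for all $\alpha\ge\alpha^*$, i.e.\ $y\in\Ssaeunb$. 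We have thus shown $\inter\Ssaeker\subseteq\Ssaeunb$; since $\inter\Ssaeker$ is an open set contained in $\Ssaeunb$, it is contained in the largest open subset of $\Ssaeunb$, namely $\inter\Ssaeunb$, which is the claim.

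The only delicate point is the sign bookkeeping in the universal block: because these terms are subtracted in~\nref{ineqAeChar}, the correct one-sided estimate there is the upper bound $\alpha|d_k|+|c_k|$, and one must check that after this substitution the coefficient of $\alpha$ is still the \emph{full} kernel slack $h(w)$ and not a weaker quantity. Once this is confirmed, the strict interior condition supplies the uniform positive lower bound $\mu$ and the compactness argument closes the proof exactly as in Theorem~\ref{thmIntUnbKer}.
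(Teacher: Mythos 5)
Your proposal is correct and is essentially the paper's own argument: the paper gives no separate proof of this theorem, remarking only that it follows \quo{by using Theorem~\ref{thmAeLipsChar}} in analogy with Theorem~\ref{thmIntUnbKer}, and your write-up is exactly that analogy carried out --- the substitution of $x^0+\alpha y$ into \nref{ineqAeChar}, the one-sided bounds $|c_k+\alpha d_k|\geq\alpha|d_k|-|c_k|$ on the existential terms and $|c_k+\alpha d_k|\leq\alpha|d_k|+|c_k|$ on the (subtracted) universal terms so that the coefficient of $\alpha$ is the full kernel slack $h(w)$, and the compactness-of-the-unit-sphere argument making $\alpha^*$ uniform in $w$, which is the same device the paper uses in the proof of Theorem~\ref{thmIntUnbKer}. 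The only caveat is your opening identification of $\inter\Ssaeker$ with the strict-inequality region $\{y\mmid h(w)>0\ \forall w\neq0\}$, which you assert in the same unproved form as the paper does for $\inter\Ssker$ (\quo{Let $y\in\inter\Ssker$, that is, \dots}), so it is faithful to the paper's reasoning; your closing step (an open set contained in $\Ssaeunb$ lies in its interior) is in fact a cleaner justification than the paper's \quo{analogous reasoning} phrase.
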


%%%%%%%%%%%%%%%%%%%%%%%%%%%%%%%%%%%%%%%%%%%%%%%%%%%%%%%%%%%%%%% 
\subsection{Parametric tolerable solution set}\label{ssToler}

Popova~\cite{Pop2015b} showed that $\Ssaeunb=\Ssaeker$ provided $\Ssae\not=\emptyset$, the universal quantifiers are in the constraint matrix only and the existential quantifiers are in the right-hand side only. This positioning of quantifiers is an important case of AE solutions and the corresponding solutions are called tolerable solutions. In this section, we slightly extend her result by allowing universal quantifiers to be situated everywhere.

Let $\ivr{p}=(\inum{p}_1,\dots,\inum{p}_K)^T\in\IR^K$, $\ivr{q}=(\inum{q}_1,\dots,\inum{q}_L)^T\in\IR^L$, $A^{(1)},\dots,A^{(K)}\in\R^{m\times n}$, and $b^{(1)},\dots,b^{(K)},d^{(1)},\dots,d^{(L)}\in\R^m$.
Consider a linear interval parametric system 
\begin{align}\label{sysTolerLIPS}
A(p)x=b(p,q),\ \ p\in\ivr{p},\ q\in\ivr{q},
\end{align}
where\begin{align*}
A(p)=\sum_{k=1}^K p_k A^{(k)},\quad
b(p,q)=\sum_{k=1}^K p_k b^{(k)}+\sum_{\ell=1}^L q_{\ell} d^{(\ell)}.
\end{align*}
The \emph{tolerable solution set} of this parametric system is defined as
\begin{align*}
\Sstol=\{x\in\R^n\mmid \forall p\in\ivr{p}\,\exists q\in\ivr{q}: A(p)x=b(p,q)\}.
\end{align*}
The corresponding set of all \emph{unbounded directions} of $\Sstol$ is
\begin{align*}
\Sstolunb=\left\{y\in\R^n\mmid 
  \exists x\in\Sstol\ \forall \alpha\geq0: x+\alpha y\in\Sstol\right\}
\end{align*}
and the \emph{kernel} of the tolerable solution set is
\begin{align*}
\Sstolker=\{y\in\R^n\mmid
  \forall p\in\ivr{p}: A(p)y=0\}.
\end{align*}

\begin{theorem}
If $\Sstol\not=\emptyset$, then $\Sstolunb=\Sstolker$.
\end{theorem}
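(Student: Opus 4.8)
The plan is to prove the two inclusions separately, exploiting the fact that in the tolerable setting the matrix $A(p)$ depends only on the universally quantified parameters $p$, while the existentially quantified parameters $q$ enter the right-hand side only. This alignment of quantifiers is precisely what fails in Example~\ref{exIneq}, and it is what allows equality to be recovered here.

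For the inclusion $\Sstolunb\subseteq\Sstolker$, I would first observe that the tolerable system \nref{sysTolerLIPS} is a special AE system in the sense of Section~\ref{sAE}: declare every $p_k$ universal and every $q_\ell$ existential, the existential parameters carrying zero matrix coefficient. This inclusion is then an instance of Theorem~\ref{thmUnbKerAe}. Alternatively I would argue directly: if $y\in\Sstolunb$, there is $x^0\in\Sstol$ with $x^0+\alpha y\in\Sstol$ for all $\alpha\geq0$, so for each fixed $p\in\ivr{p}$ and each $\alpha$ there is $q(\alpha)\in\ivr{q}$ with $A(p)x^0+\alpha A(p)y=b(p,q(\alpha))$. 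Since $q$ ranges over the bounded box $\ivr{q}$, the right-hand side stays bounded as $\alpha\to\infty$, which forces $A(p)y=0$. As $p\in\ivr{p}$ was arbitrary, $y\in\Sstolker$.

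The substantive direction is $\Sstolker\subseteq\Sstolunb$. Here I would take any $y\in\Sstolker$, so that $A(p)y=0$ for every $p\in\ivr{p}$, and fix an arbitrary $x^0\in\Sstol$, which exists by assumption. The crucial computation is that, for every $\alpha\geq0$ and every $p\in\ivr{p}$,
$$A(p)(x^0+\alpha y)=A(p)x^0+\alpha A(p)y=A(p)x^0,$$
since $A(p)$ does not depend on $q$ and annihilates $y$. Because $x^0\in\Sstol$, for each $p$ there is $q\in\ivr{q}$ with $A(p)x^0=b(p,q)$, and that same $q$ then satisfies $A(p)(x^0+\alpha y)=b(p,q)$. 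Hence $x^0+\alpha y\in\Sstol$ for all $\alpha\geq0$, i.e. $y\in\Sstolunb$.

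The only real obstacle is conceptual rather than computational: one must notice that the universal quantification over $p$, which covers the entire matrix, is exactly the ingredient that leaves every realization $A(p)x^0$ unchanged under the translation $x^0\mapsto x^0+\alpha y$. For the united solution set the kernel requires only $A(p)y=0$ for \emph{some} $p$, which is too weak (cf. Example~\ref{exIneq}); the tolerable placement of quantifiers upgrades this to \emph{all} $p$, after which both inclusions become elementary. I would therefore not expect to need the Oettli--Prager/Farkas machinery of Theorem~\ref{thmAeLipsChar} for this statement, beyond its implicit use inside Theorem~\ref{thmUnbKerAe}.
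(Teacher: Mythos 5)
Your proof is correct, and its substantive half takes a genuinely different route from the paper's. The paper derives the inclusion $\Sstolker\subseteq\Sstolunb$ by invoking the characterization of Theorem~\ref{thmAeLipsChar} twice --- once to describe $\Sstolker$ by the inequalities $w^TA(\Mid{p})y\leq-\sum_{k=1}^K\big|w^TA^{(k)}y\big|\Rad{p}_k$ for all $w$, and once to describe membership of $x^0$ in $\Sstol$ --- and then combines the two via the triangle inequality to verify that $x^0+\alpha y$ again satisfies the characterizing inequalities. You instead work directly with the definitions: $A(p)y=0$ for every $p\in\ivr{p}$ gives $A(p)(x^0+\alpha y)=A(p)x^0$, so the very same witness $q\in\ivr{q}$ certifying $x^0\in\Sstol$ for a given $p$ certifies $x^0+\alpha y\in\Sstol$; this is valid even though $b(p,q)$ also depends on the universal parameters $p$, since $p$ is held fixed in the argument. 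Your route is more elementary (no Farkas/duality machinery beyond the optional citation of Theorem~\ref{thmUnbKerAe}) and yields slightly more: it works for all $\alpha\in\R$, not just $\alpha\geq0$, exhibiting $\Sstolker$ as a linear subspace of two-sided unbounded directions with every point of $\Sstol$ serving as a base point; moreover it never uses linearity of the dependence of $A(p)$ on $p$, so it extends verbatim to nonlinear parametric dependence. Your direct boundedness argument for the easy inclusion $\Sstolunb\subseteq\Sstolker$ (for fixed $p$, the set $\{b(p,q)\mmid q\in\ivr{q}\}$ is bounded while $\alpha A(p)y$ is unbounded unless $A(p)y=0$) is likewise sound and bypasses the AE machinery entirely; the paper simply cites Theorem~\ref{thmUnbKerAe} there, as you also offer to do. What the paper's approach buys is uniformity with the rest of the text --- its proof exercises the same inequality characterization used throughout Sections~\ref{sGen} and~\ref{sAE} --- whereas yours isolates the tolerable case as precisely the one where no such machinery is needed.
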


\begin{proof}
Inclusion $\Sstolunb\subseteq\Sstolker$ follows from Theorem~\ref{thmUnbKerAe}, so we focus on the converse inclusion. Let $y\in\Sstolker$, that is, by Theorem~\ref{thmAeLipsChar} we have
\begin{align*}
w^T A(\Mid{p})y \leq -\sum_{k=1}^K\big|w^TA^{(k)}y\big|\Rad{p}_k
% \quad \forall w\in\R^m.
\end{align*}
for every $w\in\R^n$. Since $\Sstol\not=\emptyset$, there is some $x^0\in\Sstol$ such that 
\begin{align*}
w^T\parentheses{A(\Mid{p})x^0-b(\Mid{p})}
\leq
 -\sum_{k=1}^K\big|w^T(A^{(k)}x^0-b^{(k)})\big|\Rad{p}_k
 +\sum_{\ell=1}^L \big|w^Td^{(k)}\big|\Rad{q}_{\ell}
\end{align*}
for every $w\in\R^n$. Putting together, we have that for every $w\in\R^n$ and $\alpha\geq0$
\begin{align*}
w^T&\parentheses*{A(\Mid{p})(x^0+\alpha y)-b(\Mid{p})}\\
&\leq
 -\sum_{k=1}^K\big(\big|w^T(A^{(k)}x^0-b^{(k)})\big|
     + \big|w^TA^{(k)}\alpha y\big|\big)\Rad{p}_k 
  +\sum_{\ell=1}^L \big|w^Td^{(k)}\big|\Rad{q}_{\ell}\\
&\leq
 -\sum_{k=1}^K\big|w^T(A^{(k)}(x^0+\alpha y)-b^{(k)})\big| \Rad{p}_k 
 +\sum_{\ell=1}^L \big|w^Td^{(k)}\big|\Rad{q}_{\ell}.
\end{align*}
By Theorem~\ref{thmAeLipsChar} again, this means that $x^0+\alpha y\in\Sstol$.
\end{proof}

%%%%%%%%%%%%%%%%%%%%%%%%%%%%%%%%%%%%%%%%%%%%%%%%%%%%%%%%%%%%%%% 
\section{Conclusion}

For systems of real linear equations, the unbounded directions are characterized by the kernel of the constraint matrix. We showed that this equivalence remains valid also for ordinary interval linear equations. For more general linear interval parametric systems, however, this is no longer true. On the other hand, there is still a very close relation between these two concepts for the united solution set and even for a more general AE solution set. Moreover, we identified several special cases for which the equivalence is satisfied: parametric systems of the first class and parametric tolerable solution sets. Identification of other sub-classes may be of interest, too.

%\subsubsection*{Acknowledgments.} 

%The author was supported by the Czech Science Foundation Grant P403-18-04735S.

%%%%%%%%%%%%%%%%%%%%%%%%%%%%%%%%%%%%%%%%%%%%%%%%%%%%%%%%%%%%%%% 
% REFERENCES
%%%%%%%%%%%%%%%%%%%%%%%%%%%%%%%%%%%%%%%%%%%%%%%%%%%%%%%%%%%%%%% 

\bibliographystyle{abbrv}
\bibliography{coprod_pils_unb}

\end{document}